\documentclass[12pt,a4paper]{amsart}
\usepackage{amsfonts}

\def\N{{\mathbb N}}

\def\f{{\mathbb F_p}}

\usepackage{amssymb}
\setlength{\topmargin}{0pt}
\setlength{\headheight}{28pt}

\setlength{\textwidth}{16.5cm}
\setlength{\textheight}{25cm}
%\addtolength{\textheight}{3cm}
\setlength{\oddsidemargin}{0pt}
\setlength{\evensidemargin}{0pt}
\setlength{\headsep}{18pt}
\voffset=-0.5in

\newtheorem{thm}{\bf Theorem}[section]
\newtheorem{lemma}[thm]{\bf Lemma}
\newtheorem{prop}[thm]{\bf Proposition}

\theoremstyle{definition}

 \begin{document}

 \title[A structure result for bricks in Heisenberg groups]{A structure result for bricks in Heisenberg groups}

 \author[]{Norbert Hegyv\'ari}
 \address{Norbert Hegyv\'{a}ri, ELTE TTK,
E\"otv\"os University, Institute of Mathematics, H-1117
P\'{a}zm\'{a}ny st. 1/c, Budapest, Hungary}
 \email{hegyvari@elte.hu}

 \author[]{Fran\c cois Hennecart}
 \address{Fran\c cois Hennecart, Universit\'e Jean-Monnet,
Institut Camille Jordan,
   23, rue du Docteur Paul Michelon, 42023 Saint-Etienne Cedex 02, 
   France}
 \email{francois.hennecart@univ-st-etienne.fr}

\date{\today}

\begin{abstract}
We show that for a sufficiently big \textit{brick} $B$ of the $(2n+1)$-dimensional Heisenberg group $H_n$ over the finite field
$\mathbb{F}_p$, the product set $B\cdot B$ contains at least $|B|/p$ many cosets of some non trivial subgroup 
of $H_n$.
% and $|S|>\big ({3\over 2}\big )^m{|X|\over |G|}$, for some
%$m\ge n+4-\log|G|/\log p$.
\end{abstract}

\subjclass[2000]{primary 11B75, secondary 05D10}

\keywords{Bases, Heisenberg groups}

%\thanks{The second author is grateful to the members
%of E\"otv\"os University and R\'enyi Institute for their warm hospitality during his stay}
 \maketitle

 \section{\bf Introduction}

The notion of additive bases occupies
a central position in Combinatorial Number Theory. In an additive semigroup $G$, a \textit{basis} means a subset $A$ of $G$ such that there exists an integer $h$, depending only on $A$, for which any element of $x\in G$ can be written as a sum of $h$
(or at most $h$)  members of $A$. The idea has been widely investigated in different structures in which a number of results have been shown. One can quote the celebrated Lagrange theorem in the set of nonnegative integers but also such results in $\sigma$-finite abelian groups \cite{r1}. 

 In an additive structure we will use the notation $A+B=\{a+b\,:\, a\in A,\ b\in B\}$, its extension $hA=A+A+\cdots+A$ ($h$ times) and also their counterparts $A\cdot B$, $A^h$ in a multiplicative structure. In a group we also denote $-A$ (resp. $A^{-1}$) for the set of the inverses of elements of $A$.
With this notation, $A$ is a basis in $G$ whenever for some integer $h$ one has $hA=G$ or $A^h=G$ according to the underlying structure. One also defines the notion of \textit{doubling constant}
(resp. \textit{squaring constant})
of a finite set $A$ that is $|A+A|/|A|$ (resp.  $|A\cdot A|/|A|$).

Another aspect concerns inverse results in number theory in which the Freiman theorem has a central place. It asserts that a finite set  $A$ with a small doubling constant in an abelian (additive) group $G$ has a sharp structure, namely it is  included, as a rather dense subset, in a (generalized) arithmetic progression of cosets of some subgroup of $G$ (cf. \cite{GR}). 
An important tool for the proof, known as the Bogolyubov-Ruzsa Lemma, is the fact that $2A-2A$ contains a dense substructure.

According to  the preceding discussion, one may consider the general problem of investigating in which conditions 
on a finite $A$, the sumset $A+A$ (or the product set $A\cdot A$) contains a \textit{rich} substructure.
In this paper we will focus on Heisenberg groups which
 give an interesting counterpoint  to the commutative case.

Let $p$ be a prime number and $\mathbb{F}$ 
the field with $p$ elements.
We denote by $H_n$ the $(2n+1)$-dimensional Heisenberg linear group over $\mathbb{F}$ formed with the 
upper triangular square matrices of size $n+2$ of the 
following kind
$$
[\underline{x},\underline{y},z]=\begin{pmatrix}
1 & \underline{x} & z\\
0 & I_n & ^{t}\underline{y}\\
0 & 0 & 1
\end{pmatrix},
$$
where $\underline{x}=(x_1,x_2,\dots ,x_n)$, $\underline{y}=(y_1,y_2,\dots ,y_n)$, $x_i,y_i,z\in\mathbb{F}$, $i=1,2,\dots, n$,
and $I_n$ is the $n \times n$ identity matrix.
We have $|H_n|=p^{2n+1}$. and we recall the product rule in $H_n$:
$$
[\underline{x},\underline{y},z][\underline{x'},\underline{y'},z']=
[\underline{x}+\underline{x'},\underline{y}+\underline{y'},\langle\underline{x},\underline{y'}\rangle+z+z'],
$$
where $\langle\cdot, \cdot \rangle$ is the inner product, that is $\langle\underline{x},\underline{y}\rangle=\sum_{i=1}^nx_iy_i.$

So this set of $(n+2)\times (n+2)$ matrices form a group whose unit is $e=[\underline{0},\underline{0},0]$. 
As group-theoretical properties of $H_n$, we recall that
$H_n$ is non abelian and two-step nilpotent, that is
the double commutator satisfies $aba^{-1}b^{-1}cbab^{-1}a^{-1}c^{-1}=e$ for any
$a,b,c\in H_n$, where the commutator of $a$ and $b$ is defined as $aba^{-1}b^{-1}$.

The Heisenberg group possesses an interesting structure in which we can prove that in general there is no \textit{good model} for a subset $A$ with a small \textit{squaring constant} $|A\cdot A|/|A|$ (see \cite{r2}, \cite{r3} for more details), unlike for subsets of abelian groups. We should add that the situation is less unusual if we assume that $A$ has a small \textit{cubing constant} $|A\cdot A\cdot A|/|A|$ (see \cite{T2}).

%Here by a good $s$-model for $A$ we mean a subset $A'$ of a finite group $G'$ such that
%$A$ is $s$-Freiman isomorphic to $A'$ and $|G'|\ll_{s,K}|A'|$ where
%$K=|A\cdot A|/|A|$ is the squaring ratio (see \cite{r3} for more details).

We now quote the  following well-known results.
\begin{lemma}\label{l1} Let $X$ and $Y$ be subsets of a finite (multiplicative) group $G$. 
If $|X|+|Y|>|G|$ then $G=X\cdot Y$.
\end{lemma}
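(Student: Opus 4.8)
The plan is to prove the statement directly by fixing an arbitrary $g\in G$ and exhibiting a factorization $g=xy$ with $x\in X$ and $y\in Y$. The key observation is that such a factorization exists precisely when the two sets $X$ and $gY^{-1}=\{gy^{-1}:y\in Y\}$ meet: if $x\in X\cap gY^{-1}$, say $x=gy^{-1}$ with $y\in Y$, then $g=xy\in X\cdot Y$, and conversely any factorization yields such a common element. So the whole statement reduces to showing that $X\cap gY^{-1}\neq\emptyset$ for every $g$.

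To establish this I would invoke a pigeonhole count inside the finite group $G$. The inversion map $z\mapsto z^{-1}$ is a bijection of $G$, and so is the left translation $z\mapsto gz$; composing them shows $|gY^{-1}|=|Y|$. Hence
$$
|X|+|gY^{-1}|=|X|+|Y|>|G|.
$$
Two subsets of $G$ whose cardinalities sum to more than $|G|$ cannot be disjoint, since by inclusion--exclusion
$$
|X\cap gY^{-1}|=|X|+|gY^{-1}|-|X\cup gY^{-1}|\geq |X|+|Y|-|G|>0.
$$
Therefore $X\cap gY^{-1}$ is non-empty, which produces the desired factorization of $g$.

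Since $g\in G$ was arbitrary, every element of $G$ lies in $X\cdot Y$, giving $G=X\cdot Y$. There is essentially no hard step here: the argument is a one-line application of the pigeonhole principle in a finite group, and the only point requiring a little care in the non-abelian setting is to use the inverse set $Y^{-1}$ together with a \emph{one-sided} (left) translation, so that the cardinality-preserving bijections are valid without any commutativity assumption. I expect no genuine obstacle, as the statement is a standard consequence of the pigeonhole principle.
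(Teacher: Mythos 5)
Your proof is correct and follows exactly the paper's own argument: for each $g\in G$ one shows $X\cap(\{g\}\cdot Y^{-1})\neq\emptyset$ via inclusion--exclusion, since $|X|+|\{g\}\cdot Y^{-1}|=|X|+|Y|>|G|$. The only difference is that you spell out the cardinality-preserving bijections and the extraction of the factorization, which the paper leaves implicit.
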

The proof follows from the simplest case of the sieve formula:
 $$
|X\cap(\{x\}\cdot Y^{-1})|=|X|+|\{x\}\cdot Y^{-1}|-|X\cup(\{x\}\cdot Y^{-1})|\ge|X|+|Y|-|G|>0.
$$
\begin{lemma}\label{l2} Let $X$ and $Y$ be subsets of $\mathbb{F}$. 
If $X+Y\ne \mathbb{F}$ then $|X+Y|\ge|X|+|Y|-1$.
\end{lemma}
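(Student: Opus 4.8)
The statement is the Cauchy--Davenport inequality for the prime field $\mathbb{F}=\mathbb{Z}/p\mathbb{Z}$: the hypothesis $X+Y\ne\mathbb{F}$ plays the role of the truncation at $p$ in the usual formulation $|X+Y|\ge\min(p,|X|+|Y|-1)$. I may assume $X$ and $Y$ are nonempty, since otherwise $X+Y=\varnothing$ and the claim needs the standard nonemptiness convention. My plan is to argue by induction on $|Y|$ using the Davenport (or Dyson) $e$-transform. The base case $|Y|=1$ is immediate, as then $X+Y$ is a translate of $X$ and $|X+Y|=|X|=|X|+|Y|-1$.

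For the inductive step, suppose $|Y|\ge 2$ and pick any $e\in X-Y$ (nonempty since $X,Y\ne\varnothing$). I set
$$X(e)=X\cup(Y+e),\qquad Y(e)=Y\cap(X-e).$$
The transform has three properties I would verify directly: (i) $Y(e)\ne\varnothing$, because $e\in X-Y$; (ii) the total size is preserved, $|X(e)|+|Y(e)|=|X|+|Y|$, by inclusion--exclusion together with $|Y(e)|=|(Y+e)\cap X|$; and (iii) the sumset can only shrink, $X(e)+Y(e)\subseteq X+Y$, which follows from $X+Y(e)\subseteq X+Y$ and $(Y+e)+Y(e)\subseteq(Y+e)+(X-e)=X+Y$.

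The crux is a dichotomy. If for some admissible $e$ one has $Y(e)\subsetneq Y$, then the pair $(X(e),Y(e))$ has strictly smaller second set, still satisfies $X(e)+Y(e)\subseteq X+Y\ne\mathbb{F}$, and has the same total size; so the inductive hypothesis gives $|X+Y|\ge|X(e)+Y(e)|\ge|X(e)|+|Y(e)|-1=|X|+|Y|-1$, as desired. Otherwise $Y(e)=Y$, i.e. $Y+e\subseteq X$, for every $e\in X-Y$; taking $e=x-y_0$ for a fixed $y_0\in Y$ and arbitrary $x\in X$ shows that $X$ is invariant under translation by every difference $y-y_0$, $y\in Y$. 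Since $|Y|\ge 2$ there is a nonzero such difference, and as $p$ is prime it generates all of $\mathbb{F}$; hence $X=\mathbb{F}$ and $X+Y=\mathbb{F}$, contradicting the hypothesis.

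The main obstacle is the verification of properties (i)--(iii) of the transform and, above all, the last step, where the primality of $p$ is indispensable: over a non-prime modulus $X$ could be a proper union of cosets of a nontrivial subgroup. An alternative route avoiding the transform is the polynomial method: assuming $|X+Y|\le|X|+|Y|-2$, I would enlarge $X+Y$ to a set $S$ of size exactly $|X|+|Y|-2$ and apply the Combinatorial Nullstellensatz to $\prod_{s\in S}(u+v-s)$. Here the additive analogue of Lemma \ref{l1} guarantees $|X|+|Y|\le p$, which is exactly what makes the coefficient $\binom{|X|+|Y|-2}{|X|-1}$ nonzero modulo $p$ and forces some $(a,b)\in X\times Y$ with $a+b\notin S$, the desired contradiction.
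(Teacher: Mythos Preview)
Your argument via the Davenport $e$-transform is correct, and the alternative polynomial-method sketch is also sound (the key point you note, that $X+Y\ne\mathbb{F}$ forces $|X|+|Y|\le p$ by the additive form of Lemma~\ref{l1}, is exactly what is needed to make the binomial coefficient $\binom{|X|+|Y|-2}{|X|-1}$ nonzero in $\mathbb{F}$).

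However, the paper does not prove this lemma at all: it simply records it as the Cauchy--Davenport Theorem and refers the reader to \cite{TV}. So there is nothing to compare on the level of argument; you have supplied a full self-contained proof where the paper is content with a citation.
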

This general  lower bound for the cardinality of sumsets in $\mathbb{F}$ is known as the Cauchy-Davenport Theorem (see e.g. \cite{TV}).

We deduce from Lemma \ref{l1} that a sufficient condition ensuring
that a subset $A\subseteq H_n$ is a basis is $|A|>|H_n|/2$. Moreover this condition is sharp if $p=2$ since in that case $H_n$ has a subgroup of index $2$. For $p>2$, any subset of $H_n$
with cardinality bigger than $|H_n|/p$ is not contained 
in a coset of a proper subgroup of $H_n$, hence it
is a basis for some order $h$ bounded by a function  depending
only on $p$: indeed by a theorem of Freiman in arbitrary finite groups
(see \cite{Tao}, paragraph 4.9), 
it is known that if $A$ is not included in some coset of some
proper subgroup of $H_n$ then $|A\cdot A|\ge3|A|/2$. From this 
we  deduce by iteration that the $2^j$-fold product set
$A^{2^j}$ satisfies $|A^{2^j}|>|H_n|/2$ for
$j\ge \ln(p/2)/\ln(3/2)$, hence the result by Lemma~\ref{l1}.

The above discussion shows that any sufficiently dense subset of $H_n$ is a basis. This does not hold true in general for sparse subsets. Another main consideration is that the \textit{squaring constant} $|A\cdot A|/|A|$ of a $A\subset H_n$ is not necessarily  big. 
%A general subset $A$ is not necessarily a basis and even has not necessarily a big squaring constant. 
So we can ask the following: is it true that for any subset $A$ of $H_n$ which is large enough, the product set $A\cdot A$ always contains some non-trivial substructure of $H_n$ ?
A dual question emerged in $\mathrm{SL}_2(\mathbb{F})$ in \cite{r6} (see also \cite{r5}) where it is proved that for any generating  subset $A$ of $\mathrm{SL}_2(\mathbb{F})$ such that $|A|<p^{3-\delta}$, one has $|A\cdot A\cdot A|>|A|^{1+\varepsilon}$ with $\epsilon$ depending only on  $\delta>0$.

Nevertheless, the structure cannot be handled in general.
We will restrict our attention to subsets that will be called \textit{bricks}.

Let $B\subseteq H_n,$ and write the projections of $B$ onto each coordinates by $X_1,X_2,\dots ,X_n$, $Y_1,Y_2,\dots ,Y_n$ and $Z$, i.e. one has 
$[\underline{x}, \underline{y},z]\in B$, 
 $\underline{x}=(x_1,x_2,\dots ,x_n), \underline{y}=(y_1,y_2,\dots ,y_n)$, if and only
 if $x_i\in X_i$ or $y_i\in Y_i$ for some $i$, or $z\in Z$. 
 
A subset $B$ of $H_n$ is said to be a \textit{brick} if
$$
B=[\underline{X},\underline{Y},Z]:=\{[\underline{x}, \underline{y},z]\text{ such that }\underline{x}\in\underline{X},\   \underline{y}\in\underline{Y},\ z\in Z\}
$$
where $\underline{X}=X_1\times\cdots\times X_n$ and
$\underline{Y}=Y_1\times\cdots\times Y_n$ with 
non empty-subsets $X_i,Y_i\subset\mathbb{F}^*$.
If $|Z|>p/2$ then $2Z=\mathbb{F}$  by Lemma \ref{l1}, hence
$B\cdot B=[2\underline{X},2\underline{Y},\mathbb{F}]$. This shows that $B\cdot B$ is a union of $|B\cdot B|/p\ge |B|/p$ cosets of
the subgroup $[\underline{0},\underline{0},\mathbb{F}]$ of $H_n$.
Our aim is to partially extend this result under an appropriate assumption on the size of $B$.

\begin{thm}\label{th1}
For every $\varepsilon>0,$ there exists a positive integer $n_0$ such that if $n\ge n_0$, $B\subseteq H_n$ is a brick and
$$
|B|>|H_n|^{3/4+\varepsilon}
$$ 
then there exists a non trivial subgroup $G$ of $H_n$, namely its center $[\underline{0},\underline{0},\mathbb{F}]$, such that $B\cdot B$ contains  a union of %$M:=\big(\frac32\big)^m\frac{|B|}{|G|}$ 
at least $|B|/p$ many cosets of   $G$. 
%where $m\geq n+4-\log |G|/ \log p$. % and provided $p>2^{n(n-2)/2}.$ 
%On other words, we have 
%$$
%B\cdot B=S\cdot G,\quad \text{where $|S|\geq M$.}
%$$
\end{thm}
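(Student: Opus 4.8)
The plan is to turn the assertion ``$B\cdot B$ contains the full coset $[\underline a,\underline b,\mathbb{F}]$ of the center $[\underline 0,\underline 0,\mathbb{F}]$'' into a solvability condition and then count the pairs $(\underline a,\underline b)$ for which it holds. Writing $B=[\underline X,\underline Y,Z]$ and applying the product rule, the coset $[\underline a,\underline b,\mathbb{F}]$ lies in $B\cdot B$ precisely when, for \emph{every} $w\in\mathbb{F}$, the equation $\langle\underline x,\underline y'\rangle+z+z'=w$ is solvable with $\underline x\in U(\underline a):=\underline X\cap(\underline a-\underline X)$, $\underline y'\in V(\underline b):=\underline Y\cap(\underline b-\underline Y)$ and $z,z'\in Z$; here $U(\underline a)$ factors as $\prod_i\bigl(X_i\cap(a_i-X_i)\bigr)$ and similarly for $V(\underline b)$. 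If $|Z|>p/2$ the conclusion already follows from the remark preceding the theorem, so I assume $|Z|\le p/2$. Since distinct pairs $(\underline a,\underline b)\in\mathbb{F}^n\times\mathbb{F}^n$ index distinct cosets of the center, it suffices to show that the number of \emph{good} pairs, those for which the equation is solvable for all $w$, is at least $|B|/p$.

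The second step is to extract a clean sufficient condition for goodness by a Fourier expansion. Let $N(\underline a,\underline b,w)$ be the number of representations above. Expanding the indicator of $\langle\underline x,\underline y'\rangle+z+z'=w$ with additive characters $e_p(u)=e^{2\pi i u/p}$ and using the bilinearity of $\langle\cdot,\cdot\rangle$, the frequency $t=0$ contributes the principal term $p^{-1}|U(\underline a)|\,|V(\underline b)|\,|Z|^2$, while for $t\ne0$ the inner-product character sum factors over the $n$ coordinates as $\prod_i C_i(t)$ with $C_i(t)=\sum_{x\in U_i,\,y\in V_i}e_p(txy)$. Cauchy--Schwarz combined with orthogonality gives the square-root bound $|C_i(t)|\le\sqrt{p\,|U_i|\,|V_i|}$, hence $\prod_i|C_i(t)|\le p^{n/2}\sqrt{|U(\underline a)|\,|V(\underline b)|}$; summing the resulting error over $t\ne0$ and using $\sum_{t\ne0}|\widehat Z(t)|^2\le p|Z|$ shows that $N(\underline a,\underline b,w)>0$ for all $w$ as soon as
\[
|U(\underline a)|\,|V(\underline b)|\,|Z|^2>p^{\,n+2}.
\]
Securing uniform square-root cancellation coordinate by coordinate, and verifying that the product structure makes these $n$ savings multiply, is the technical heart of the argument and the step I expect to be the main obstacle.

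The third step is a first-moment count of the good pairs. Because $\sum_{a\in\mathbb{F}}|X_i\cap(a-X_i)|=|X_i|^2$, the product structure gives $\sum_{\underline a}|U(\underline a)|=|\underline X|^2$ and $\sum_{\underline b}|V(\underline b)|=|\underline Y|^2$, so $\sum_{\underline a,\underline b}|U(\underline a)|\,|V(\underline b)|=|\underline X|^2|\underline Y|^2$. Writing $P=|\underline X|\,|\underline Y|$ and $M=p^{n+2}/|Z|^2$, I would bound the contribution of the bad pairs (those with $|U(\underline a)|\,|V(\underline b)|\le M$) by $M\cdot p^{2n}$, while each good pair contributes at most $P$ since $|U(\underline a)|\le|\underline X|$. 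This yields that the number of good pairs is at least $P-p^{3n+2}/(|Z|^2P)$.

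Finally I would check that this lower bound exceeds $|B|/p=P|Z|/p$. Clearing denominators, this is equivalent to $|B|^2\bigl(1-|Z|/p\bigr)\ge p^{3n+2}$, and since $|Z|\le p/2$ it suffices that $|B|^2\ge 2p^{3n+2}$. The hypothesis $|B|>|H_n|^{3/4+\varepsilon}=p^{(2n+1)(3/4+\varepsilon)}$ gives $|B|^2>p^{3n+3/2+(4n+2)\varepsilon}$, which beats $2p^{3n+2}$ once $(4n+2)\varepsilon\ge 3/2$; this holds for all $p\ge 3$ as soon as $n\ge n_0(\varepsilon)$ with $n_0$ depending only on $\varepsilon$ (the case $p=2$ being vacuous, as then $|B|\le 2$). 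Hence $B\cdot B$ contains at least $|B|/p$ cosets of the center, as claimed.
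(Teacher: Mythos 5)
Your proposal is correct and follows essentially the same route as the paper: the same reduction to $|Z|\le p/2$, the same Fourier/Cauchy--Schwarz bound yielding the sufficient condition $|U(\underline a)|\,|V(\underline b)|\,|Z|^2>p^{n+2}$ (this is the paper's Proposition 3.1, and the square-root cancellation step you flag as the main obstacle goes through exactly as you sketch), and the same first-moment count of good pairs via $\sum_{\underline a,\underline b}|U(\underline a)|\,|V(\underline b)|=|\underline X|^2|\underline Y|^2$. The only differences are cosmetic (you keep the threshold $p^{n+2}/|Z|^2$ rather than weakening it to $p^{n+2}$, and arrange the final numerical comparison slightly differently), so no further comment is needed.
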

We stress the fact that $n_0$ depends only on $\varepsilon$
and that this result is valid uniformly in $p$.

In the case when $B$ has a small squaring constant, namely 
$|B\cdot B|<3 |B|/2$, we already observed that $B\cdot B$ is a coset modulo some subgroup of $H_n$ (see \cite{Tao}).
Theorem \ref{th1} thus provides a partial extension of this fact by giving a substructure result  for 
sufficiently big bricks $B$ in $H_n$.  

The  statement in Theorem \ref{th1} can be plainly extended  to any subset $B'\subset H_n$  which derives from a brick $B$ by conjugation : $B'=P^{-1}BP$ where $P$ is a given element of $H_n$.

As a remark also in connection with  the above-mentioned Helfgott's result, we notice that for a brick $B$ of the Heisenberg group $H_n$  its \textit{squaring set}
$B\cdot B$ could have a small period, namely the maximal subgoup $P$ such that $B\cdot B\cdot P=B\cdot B$ and at the same time
its squaring constant $|B\cdot B|/|B|$ could be small. To see this
take the brick $B=\{[x,y,z],\ 0\le x, y< \sqrt{p}/2,\ z\in\mathbb{F}\}$
in $H_1$ (the general case $n\ge1$ could be easily derived from 
this discussion).
Then $B$ generates the full group $H_1$ and its cardinality 
satisfies $|B|\asymp p^2$. Furthermore
$B\cdot B=\{[x,y,z],\ 0\le x,y< \sqrt{p},\ z\in\mathbb{F}\}$
has cardinality less than $4|B|$ and its period is the 
subgroup $[{0},{0},\mathbb{F}]$.
%This even shows that there is no function $f(t)$ growing to infinity
%when $t$ tends to infinity such that $|B\cdot B|/|B|=f(|B|/|G|)$
%where $G$ is the period of the brick $B$. 
In the opposite direction one can show:
\begin{thm}\label{th13}
There exist an absolute constant $c>0$ and a constant $\alpha=\alpha(p)$ such that for any brick $B$ 
there exists a subgroup $G$ of $H_n$ satisfying 
$$
B\cdot B\cdot G=B\cdot B
$$ 
and
$$
\frac{|B\cdot B|}{|B|}\ge c\left(\frac{|B|}{|G|}\right)^{\alpha}.
$$
\end{thm}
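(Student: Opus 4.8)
The plan is to read off both the squaring constant and the period $G$ from the one-dimensional projections. Writing $B=[\underline X,\underline Y,Z]$, the product rule shows that $B\cdot B$ fibers over its $2n$ horizontal coordinates: the projection onto the $(\underline x,\underline y)$-coordinates is exactly $2\underline X\times 2\underline Y=(2X_1\times\cdots\times 2X_n)\times(2Y_1\times\cdots\times 2Y_n)$, and over a point $(\underline u,\underline v)$ the central fiber is $W(\underline u,\underline v)=S(\underline u,\underline v)+2Z$, where $S(\underline u,\underline v)$ is the set of values $\langle\underline x,\underline y'\rangle$ taken over all admissible splittings $\underline u=\underline x+\underline x'$, $\underline v=\underline y+\underline y'$. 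Since every fiber contains a translate of $2Z$ this already gives the crude bound
$$|B\cdot B|\ge |2Z|\prod_{i=1}^n|2X_i|\prod_{j=1}^n|2Y_j|.$$
I would also record a sharper bound obtained by counting representations: using $Y_i\subset\mathbb F^*$, for a fixed nonzero $y_i'$ the map $x_i\mapsto x_iy_i'$ is injective, so $|W(\underline u,\underline v)|\ge|S(\underline u,\underline v)|$ can be estimated from the number of splittings via the Cauchy--Davenport bound (Lemma~\ref{l2}) applied to the coordinate product sets. Summing over $(\underline u,\underline v)$ and using $\sum_{u_i\in 2X_i}|\{x\in X_i:u_i-x\in X_i\}|=|X_i|^2$ turns a factor $|2X_i|$ into the larger factor $|X_i|^2$ whenever a direction is exploited this way.

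Next I would pin down $G$. Call a direction \emph{full} when its coordinate sumset is all of $\mathbb F$, and put $I=\{i:2X_i=\mathbb F\}$, $J=\{j:2Y_j=\mathbb F\}$. The fibers $W(\underline u,\underline v)$ are invariant under the center exactly when they all equal $\mathbb F$; a convenient sufficient condition is $2Z=\mathbb F$. When the center is a period, the horizontal shifts along the full directions also preserve $B\cdot B$ (the shift changes the central coordinate only by $\langle\underline u,\underline b\rangle$, which is harmless once every fiber is all of $\mathbb F$), and because the central coordinate is then free the set
$$G=\{[\underline a,\underline b,c]:\operatorname{supp}\underline a\subseteq I,\ \operatorname{supp}\underline b\subseteq J,\ c\in\mathbb F\}$$
is genuinely a subgroup: the non-commutative cross term $\langle\underline a,\underline b'\rangle$ produced on multiplication lands in the free center. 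This $G$ has order $p^{|I|+|J|+1}$ and satisfies $B\cdot B\cdot G=B\cdot B$. When the center fails to be a period I would fall back to a smaller period, possibly $G=\{e\}$, and lean on the gain estimates below.

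The core is then a logarithmic balancing, direction by direction, of the equivalent inequality $\log(|B|/|G|)\le\alpha^{-1}\log(|B\cdot B|/|B|)+\alpha^{-1}\log(1/c)$. A non-full direction with $|X_i|=k\ge2$ adds $\log k\le\log p$ to $\log|B|$, nothing to $\log|G|$, and by Cauchy--Davenport a gain of at least $\log(2-1/k)\ge\log(3/2)$ to $\log(|B\cdot B|/|B|)$ through the crude bound; since this gain is bounded below by a positive constant, the direction is controlled as soon as $\alpha^{-1}$ is of order $\log p$, which fixes $\alpha=\alpha(p)\asymp 1/\log p$. A full direction that lies in $G$ adds $\log|X_i|$ to $\log|B|$ but a whole $\log p\ge\log|X_i|$ to $\log|G|$, so its net effect on the left side is nonpositive while its effect on the gain is nonnegative: such directions only help. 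Singletons and the constant losses from the $-1$'s in Cauchy--Davenport are absorbed into $1/c$.

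The step I expect to be the main obstacle is a full direction that does \emph{not} enter $G$, i.e. with $2X_i=\mathbb F$ but the center not a period. Here the crude bound gives only the useless gain $p/|X_i|$, and one must instead invoke the sharper representation-counting bound: fullness forces $|X_i|^2>|2X_i|=p$, so replacing the factor $|2X_i|$ by $|X_i|^2$ produces a gain of order $|X_i|$, exactly matching the $\log|X_i|$ that this direction contributes to $\log|B|$; since $\alpha^{-1}\ge1$, the direction again pays for itself. Making this quantitative for several such directions at once is the delicate point: it requires estimating $|S(\underline u,\underline v)|$ from below by Cauchy--Davenport applied to the sum over coordinates of the product sets $\{x_iy_i':x_i\in X_i,\,u_i-x_i\in X_i,\ y_i'\in Y_i,\,v_i-y_i'\in Y_i\}$, and then summing the resulting fiber sizes over all $(\underline u,\underline v)$. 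Reconciling the two lower bounds for $|B\cdot B|$ into a single exponent $\alpha(p)$ valid in every regime is where the bookkeeping is heaviest.
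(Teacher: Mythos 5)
Your framework---the fibre bound $|B\cdot B|\ge|2Z|\prod_i|2X_i|\prod_j|2Y_j|$, the $\log(3/2)$ gain per non-full direction from Cauchy--Davenport, the choice $\alpha\asymp1/\log p$, and charging each direction either to growth or to $|G|$---is essentially the paper's, and your treatment of the non-full directions and of the case $2Z=\mathbb F$ is sound. The gap is exactly where you expect it, and your proposed repair does not work. The ``sharper representation-counting bound'', namely that each full direction lets you replace the factor $|2X_i|$ by $|X_i|^2$ and thereby gain a factor of order $|X_i|$ per direction, is false once there are two or more such directions: all the extra representations of a fixed base point $(\underline u,\underline v)$ are injected into a \emph{single} central fibre of size at most $p$, so the total gain over the crude bound is at most one factor $p/|2Z|$, not a product over directions. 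Concretely, for $B=[(\mathbb F^*)^n,\{\underline c\},\{0\}]$ with all $c_i\ne0$ one has $|B\cdot B|\le p^{n+1}$, whereas your bound would give $\prod_i|X_i|^2=(p-1)^{2n}$, which already exceeds $p^{n+1}$ at $n=2$, $p=5$. Moreover, with $\alpha\asymp1/\log p$ and $G=\{e\}$ this same example violates the conclusion for large $n$ (the true gain is only $(1+\tfrac1{p-1})^n\cdot p$, far below $c(p-1)^{n\alpha}$), so the large directions \emph{must} be placed inside $G$ even when $2Z\ne\mathbb F$; your construction of $G$ cannot do this, because your only route to making the central fibres full goes through $Z$.

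The missing idea is the paper's covering mechanism, which fills the central fibres using the horizontal coordinates rather than $Z$. The threshold is taken to be $|X_i|>p/\sqrt2$ rather than $2X_i=\mathbb F$ (your condition is too weak: $2X_i=\mathbb F$ can hold with $|X_i|\asymp\sqrt p$, in which case the popular-sum sets below are tiny). If at least three components exceed this threshold, two of them are of the same type, say $X_i$ and $X_j$; a popular-sum argument yields $w_i,w_j$ with $|X_i'|,|X_j'|>p/2$, where $X_i'=X_i\cap(w_i-X_i)$ and $X_j'=X_j\cap(w_j-X_j)$, and then $y_i'X_i'+y_j'X_j'=\mathbb F$ for all $y_i',y_j'\in\mathbb F^*$ by the additive form of Lemma~\ref{l1}. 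Hence the central fibres over the base points whose $i$th and $j$th coordinates are $w_i,w_j$ equal $\mathbb F$ independently of $Z$, and one may take $G$ to contain the centre together with all but two of the large horizontal directions, so that $|G|=p^{\ell-1}$ absorbs their cost; the $(\sqrt2)^k$ gain from the small directions then closes the inequality with $\alpha\asymp1/\log p$. When fewer than three components are large, $G=\{e\}$ costs only a bounded extra power of $p$, absorbed into $c$. Without this step your argument has a genuine gap.
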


This lower bound gives either a growing property for 
$B\cdot B$ in the case where $G$ is small or a rather regular 
structure for  $B\cdot B$ if $G$ is big.

Finally  we will show that the exponent $3/4+\varepsilon$ in Theorem \ref{th1} cannot be essentially reduced to less than $1/2$:

\begin{prop}\label{p2}
For any $n$ and $p$ 
there exists a brick $B\subseteq H_n$ such that
$$
|B|\ge\frac{\sqrt{p}}{4(2n)^n}|H_n|^{1/2}
$$ 
and the only cosets contained in $B\cdot B$ are cosets of the trivial  subgroup of $H_n$.
\end{prop}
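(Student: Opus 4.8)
The plan is to exhibit an explicit brick whose two projections onto the $\underline x$- and $\underline y$-blocks are too short to contain a full line, and whose third coordinate stays confined, after passing to $B\cdot B$, to a genuine interval of integers of length less than $p$. Concretely, I would fix the threshold $t=\lfloor\sqrt{p/(2n)}\rfloor$ and take $X_i=Y_i=\{1,2,\dots,t\}$ for every $i$, so that $nt^2\le p/2$, together with $Z=\{0,1,\dots,s\}$ where $s=\lfloor(p-1-nt^2)/2\rfloor$, which guarantees $nt^2+2s\le p-1$. This is a legitimate brick $B=[\underline X,\underline Y,Z]$ with $X_i,Y_i\subset\mathbb{F}^*$, and its size is $|B|=t^{2n}(s+1)$; since $t^{2n}$ is of order $(p/(2n))^n$ and $s+1\gtrsim p/4$, a direct computation confirms $|B|\ge\frac{p^{n+1}}{4(2n)^n}=\frac{\sqrt p}{4(2n)^n}|H_n|^{1/2}$. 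For the few pairs with $p<2n$ the target is below $1$ and a one-element brick suffices, so I may assume $p\ge 2n$, i.e. $t\ge1$.

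Next I would reduce the problem to subgroups of order $p$. Suppose, for contradiction, that $B\cdot B$ contained a coset $hG$ of a non trivial subgroup $G$. As $|G|$ is a power of $p$, Cauchy's theorem provides an element of order $p$, spanning a subgroup $G_0\le G$ of order $p$; decomposing $hG$ into left cosets of $G_0$ shows $hG_0\subseteq hG\subseteq B\cdot B$. Thus it is enough to prove that no coset of an order-$p$ subgroup can lie in $B\cdot B$. Writing such a subgroup as $G_0=\langle g\rangle$ with $g=[\underline u,\underline v,w]\ne e$, I would distinguish two cases.

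If $(\underline u,\underline v)\ne(\underline 0,\underline 0)$, choose a coordinate with, say, $u_i\ne0$. Along the coset the $i$-th entry of the $\underline x$-block is $a_i+ku_i$, which runs through all of $\mathbb{F}$ as $k$ varies, whereas every element of $B\cdot B$ has that entry in $2X_i=\{2,\dots,2t\}\subsetneq\mathbb{F}$ (and symmetrically with $2Y_i$ when some $v_i\ne0$); this is the desired contradiction. If instead $(\underline u,\underline v)=(\underline 0,\underline 0)$, then $G_0$ is the centre $[\underline 0,\underline 0,\mathbb{F}]$ and $hG_0$ is a vertical line $\{[\underline a,\underline b,z]:z\in\mathbb{F}\}$, whose containment would force every residue $z$ to occur. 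But the third coordinate of any product equals $\langle\underline x,\underline y'\rangle+z_1+z_2$ with $\underline x,\underline y'\in\{1,\dots,t\}^n$ and $z_1,z_2\in\{0,\dots,s\}$, hence lies in the integer interval $\{n,\dots,nt^2+2s\}$; as this interval has fewer than $p$ elements, its reduction modulo $p$ is a proper subset of $\mathbb{F}$, again a contradiction. Both cases failing, $B\cdot B$ can contain only cosets of the trivial subgroup.

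The one genuinely delicate point will be the central case: the whole construction is arranged so that the $z$-coordinate never wraps around modulo $p$. It is exactly the tension between keeping $nt^2<p$ (which forces $t\lesssim\sqrt{p/n}$, hence $|\underline X|\,|\underline Y|\lesssim(p/n)^n$) and keeping $2s<p$ (which forces $|Z|\lesssim p$) that caps $|B|$ at order $|H_n|^{1/2}$ and clarifies why the exponent in Theorem~\ref{th1} cannot be pushed below $1/2$; balancing the split as $nt^2\approx 2s\approx p/2$ optimises the product and produces the stated constant.
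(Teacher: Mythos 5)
Your construction and argument are essentially the paper's: both take each $X_i=Y_i$ to be an integer interval of length about $\sqrt{p/(2n)}$ and $Z$ an interval of length about $p/4$, observe that $B\cdot B$ then sits inside a box all of whose $2n+1$ coordinate projections are proper subsets of $\mathbb{F}$, and conclude because a coset of a nontrivial subgroup must project onto all of $\mathbb{F}$ in some coordinate. Your reduction via Cauchy's theorem to cosets of order-$p$ subgroups is a welcome, more explicit justification of the step the paper merely asserts. The one place your write-up does not go through as claimed is the cardinality bound: with $t=\lfloor\sqrt{p/(2n)}\rfloor$ the quantity $t^{2n}$ can fall strictly below $(p/(2n))^n$ by a factor that the slack in $s+1$ does not recover. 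Concretely, for $n=1$, $p=7$ you get $t=1$, $s=2$, so $|B|=3$, while the proposition demands $|B|\ge 7^2/8=6.125$; and for $n=3$, $p=97$ you get $|B|=4^6\cdot 25=102400$ against a target of $97^4/864\approx 102464$. Your fallback ``for $p<2n$ the target is below $1$'' is also not true in general (e.g.\ $n=10$, $p=19$ gives a target $\approx 2.8$). The paper sidesteps some of this by taking \emph{all} integers in $[0,\sqrt{(p-1)/(2n)})$, i.e.\ rounding the interval length up rather than down (at the cost of admitting $0$ into $X_i$, which sits uneasily with its own requirement $X_i\subseteq\mathbb{F}^*$). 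None of this touches the substance — the structural half of your proof is fully correct and the failure is only in the unimportant constant — but the sentence ``a direct computation confirms $|B|\ge p^{n+1}/(4(2n)^n)$'' is false as stated and needs either a slightly larger interval or a weaker constant.
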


Choosing $p$ large relative to $n$ in this result implies
the desired  effect.

\section{\bf  A big period or a big squaring constant}

\begin{proof}[Proof of Theorem \ref{th13}]

if $B=[\underline{X},\underline{Y},Z]$ is a brick in $H_n$ then
$|B\cdot B|\ge |2\underline{X}||2\underline{Y}|$ since
for any $\underline{x}\in 2\underline{X}$ and any 
$\underline{y}\in 2\underline{Y}$ there clearly exists
$z\in \mathbb{F}$ such that $[\underline{x},\underline{y},z]\in B\cdot B$. Thus if $k$ is the number of components
$X_i$, $Y_j$ such that $1<|X_i|,|Y_j|\le p/\sqrt2$, then
$|B\cdot B|\ge (\sqrt2)^k|B|$ since for such component $X_i$, $Y_j$
we have $$
|2X_i|\ge\min(p,2|X_i|-1)\ge \min(p,3|X_i|/2)\ge\sqrt2|X_i|
$$ 
and similarly $|2Y_j|\ge\sqrt2|Y_j|$ by the
Cauchy--Davenport Theorem (cf. Lemma \ref{l2}). If there exist at least two 
components $X_i$ and $X_j$ 
with cardinality bigger than $p/\sqrt2$ then there are two elements $w_i,w_j$ in $\mathbb{F}$
having at least $p/2$ solutions to the equations $w_i=x_i+x'_i$, $x_i,x'_i\in X_i$, and $w_j=x_j+x'_j$, $x_j,x'_j\in X_j$. Let $X'_i=X_i\cap(w_i-X_i)$ and $X'_j=X_j\cap(w_j-X_j)$.
We now observe that 
$B\cdot B$ contains $[X_1,\dots,X_{i-1},X'_i,X_{i+1},\dots,X_{j-1},X'_j,X_{j+1},\dots,X_n,
\underline{Y}]^2$. Since $|X'_i|,|X'_j|>p/2$ we have by the additive analogue of Lemma \ref{l1} $y_iX'_i+y_jX'_j=\mathbb{F}$ for any $y_i,y_j\in\mathbb{F}^*$. It follows
 that $B\cdot B$
fully covers the union  $[2X_1\times\cdots\times\{w_i\}\times\cdots\times\{w_j\}\times \cdots\times 2X_n,2\underline{Y},\mathbb{F}]$ of cosets of the center $[\underline{0},\underline{0},\mathbb{F}]$ of $H_n$.
 Note also
that for the indices $h$ such that $|Y_h|>p/2$ (or 
$|X_h|>p/2$ for $h\ne i,j$) we have  $2Y_h=\mathbb{F}$ (respectively
$2X_h=\mathbb{F}$).

We get a similar conclusion in the same way if we assume 
$|Y_i|,|Y_j|>p/\sqrt2$ for some $i\ne j$. Thus
if we denote by $\ell$ the numbers of components 
$X_h$ and  $Y_h$ with cardinality bigger than $p/\sqrt2$ and if we assume
$\ell\ge3$, then
$B\cdot B$ contains at least $(\sqrt2)^{k}$ cosets 
of a \textit{big} period $G$ of cardinality $p^{\ell-1}$, namely 
$$
G=[K_1,\dots,K_n,L_1,\dots,L_n,\mathbb{F}]
$$ 
where $K_i, L_j= \{0\}$ or $\mathbb{F}$.
By the facts that $|B|\le p^{k+\ell+1}$ and $\ell=\ln|G|/\ln p+2$, we get
$$
\frac{|B\cdot B|}{|B|}\ge\left(\sqrt2\right)^{k}\ge
\frac1{4} \left(\frac{|B|}{|G|}\right)^{\frac{\ln3}{2\ln p}}.
$$ 
If $\ell\le 2$ this bound still holds with $G=\{0\}$.
\end{proof}

\section{\bf Fourier analysis for a sum-product estimate}

We will use the following sum-product estimate:

\begin{prop}\label{p21}
Let $n,m\in \N,$ $X_1,X_2,\dots, X_n,Y_1,Y_2,\dots Y_n\subseteq \mathbb{F}^*=\mathbb{F}\setminus\{0\}$, $Z \subseteq \mathbb{F}$.
We have
$$
mZ+\sum_{j=1}^nX_j\cdot Y_j:=\Big\{z_1+\cdots+z_m+\sum_{j=1}^nx_jy_j,\ z_i\in Z, \ x_j\in X_j, \ y_j\in Y_j\Big\}=\mathbb{F},
$$
provided
\begin{equation}\label{card}
|Z|^2
\prod_{i=1}^n|X_i|
|Y_i|>p^{n+2}.
\end{equation}
\end{prop}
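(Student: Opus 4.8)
The plan is to count, for each target $w\in\mathbb{F}$, the number $N(w)$ of tuples $(z_1,\dots,z_m,x_1,y_1,\dots,x_n,y_n)$ with $z_i\in Z$, $x_j\in X_j$, $y_j\in Y_j$ and $z_1+\cdots+z_m+\sum_j x_jy_j=w$, and to show $N(w)>0$ whenever \eqref{card} holds. Writing $e_p(t)=\exp(2\pi i t/p)$ and using the orthogonality relation $\frac1p\sum_{a\in\mathbb{F}}e_p(at)=\mathbf 1_{t=0}$, I would expand
$$
N(w)=\frac1p\sum_{a\in\mathbb{F}}\widehat Z(a)^m\Big(\prod_{j=1}^n S_j(a)\Big)e_p(-aw),
$$
where $\widehat Z(a)=\sum_{z\in Z}e_p(az)$ and $S_j(a)=\sum_{x\in X_j,\,y\in Y_j}e_p(axy)$. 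The term $a=0$ is the main term $\frac1p|Z|^m\prod_j|X_j||Y_j|$, which is exactly the total number of tuples divided by $p$; the goal is then to show that the terms with $a\neq0$ contribute strictly less in absolute value.

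The key estimate is a bound on the bilinear sums $S_j(a)$ for $a\neq0$. Applying Cauchy--Schwarz in the variable $x$ and then opening the square and extending the outer sum to all $x\in\mathbb{F}$, I obtain $|S_j(a)|^2\le |X_j|\sum_{x\in\mathbb{F}}\big|\sum_{y\in Y_j}e_p(axy)\big|^2=|X_j|\sum_{y,y'\in Y_j}\sum_{x\in\mathbb{F}}e_p(ax(y-y'))$; since $a\neq0$ the inner sum vanishes unless $y=y'$, leaving $|S_j(a)|^2\le p\,|X_j||Y_j|$. Thus $|S_j(a)|\le (p|X_j||Y_j|)^{1/2}$ uniformly for $a\neq0$, whence $\prod_j|S_j(a)|\le p^{n/2}\big(\prod_j|X_j||Y_j|\big)^{1/2}$. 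This step, where the multiplicative pairing $x_jy_j$ is decoupled against the additive character, is the real content and the main obstacle; the rest is bookkeeping with orthogonality and Parseval.

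For the factor coming from $Z$ I would use Parseval, $\sum_{a\in\mathbb{F}}|\widehat Z(a)|^2=p|Z|$, together with the trivial pointwise bound $|\widehat Z(a)|\le|Z|$, to get $\sum_{a\neq0}|\widehat Z(a)|^m\le |Z|^{m-2}\sum_{a\neq0}|\widehat Z(a)|^2\le p|Z|^{m-1}$ (valid for $m\ge2$, which is the case $m=2$ relevant to $B\cdot B$). Combining the two estimates, the total contribution of the terms $a\neq0$ is at most $p^{n/2}|Z|^{m-1}\big(\prod_j|X_j||Y_j|\big)^{1/2}$, and a direct comparison shows this is strictly smaller than the main term $\frac1p|Z|^m\prod_j|X_j||Y_j|$ precisely when $p^{n+2}<|Z|^2\prod_j|X_j||Y_j|$, which is hypothesis \eqref{card}. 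Hence $N(w)>0$ for every $w\in\mathbb{F}$, which is the assertion.
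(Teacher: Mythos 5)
Your proposal is correct and follows essentially the same argument as the paper: the paper's $\widehat{f_j}(a)$ is just your bilinear sum $S_j(a)$ normalized by $|X_j|$, and both proofs use the identical Cauchy--Schwarz/Parseval bound $|S_j(a)|\le\sqrt{p|X_j||Y_j|}$ for $a\ne 0$ together with the $L^\infty$--$L^2$ treatment $\sum_{a\ne0}|\widehat Z(a)|^m\le |Z|^{m-2}\cdot p|Z|$ of the $Z$-factor. Your explicit remark that this last step needs $m\ge2$ is a fair point (the paper's proof implicitly assumes the same), and $m=2$ is indeed the only case used.
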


\begin{proof}
Let $X_i(t)$ (resp. $Y_i(t)$ and $Z(t)$) be the indicator of the set $X_i$ (resp $Y_i$ and $Z$).
One defines
$$
f_i(t)={1\over |X_i|}\sum_{a\in X_i}Y_i\big({t\over a}\big), \quad i=1,2,\dots ,n.
$$
Notice that $0\le f_i(t)\leq 1,$ and $f_i(t)>0$ if and only if $t\in X_i\cdot Y_i$. The Fourier transform of $f_i$ is
$$\widehat{f_i}(r)=\sum_xf_i(x)e(xr)
$$ 
where $e(x)=\exp(2\pi ix/p)$ as usual.

An easy calculation shows that for every $i=1,2,\dots ,n$
$$
\widehat{f_i}(r)={1\over |X_i|}\sum_{a\in X_i}\widehat{Y_i}(ra)
$$
and
\begin{equation}\label{eqn1}
\widehat{f_i}(0)={1\over |X_i|}\sum_{a\in X_i}\widehat{Y_i}(0)=|Y_i|,
\end{equation}
since $\widehat{Y_i}(0)=\sum_xY_i(x)=|Y_i|$.
Using the Cauchy inequality and the Parseval equality we get
if $p\nmid r$
\begin{equation}\label{eqn2}
|\widehat{f_i}(r)|\leq  {1\over \sqrt{|X_i|}}\sqrt{\sum_x|\widehat{Y_i}(x)|^2}=\sqrt{{p|Y_i|\over |X_i|}} 
\end{equation}

Let $u\in \mathbb{F}$. Let $S$ be the number of solutions of the equation
$$
u=z_1+z_2+\dots + z_m+ \sum_{j=1}^nx_jy_j, \quad z_i\in Z, \ x_j\in X_j,\  y_j\in Y_j.
$$
We can express $S$ by the mean of the Fourier transforms
of $Z$ and $f_i$ as follows:
$$
pS=\sum_{r\in \f}\widehat{Z}(r)^m\prod_{i=1}^n\widehat{f_i}(r)e(-ru).
$$
Our task is to show that this exponential sum is positive if the desired bound for the cardinalities
\eqref{card} holds.
Separating $r=0$ and using \eqref{eqn1} we can bound $S$ as
\begin{align*}
pS\geq& |Z|^m\prod_{i=1}^n|Y_i|-\sum_{r\neq 0}|\widehat{Z}(r)|^m\prod_{i=1}^n|\widehat{f_i}(r)|\\
\geq& |Z|^m\prod_{i=1}^n|Y_i|-|Z|^{m-2}\prod_{i=1}^n\sqrt{{p|Y_i|\over |X_i|}}\sum_{r\neq 0}|\widehat{Z}(r)|^2\\
\geq& |Z|^m\prod_{i=1}^n|Y_i|-p|Z|^{m-1}\prod_{i=1}^n\sqrt{{p|Y_i|\over |X_i|}}
%\geq& |Z|^m\prod_{i=1}^n|Y_i|-\max_{r\neq 0}\left\{|\widehat{Z}(r)|^{m-{2\over %n+1}}\prod_{i=1}^n|\widehat{f_i}(r)|^{1-\frac{2}{n+1}}\right\}\sum_{r\in \f}|%\widehat{Z}(r)|^{2\over n+1}\prod_{i=1}^n|\widehat{f_i}(r)|^{2\over n+1}
\end{align*}
by the Parseval equality and \eqref{eqn2}.
Hence $S>0$ whenever
$$
|Z|^2\prod_{i=1}^n|X_i||Y_i|>p^{n+2}.
$$
This completes the proof.
\end{proof}

\section{\bf Proofs of Theorem \ref{th1} and Proposition \ref{p2}}

\begin{proof}[Proof of Theorem \ref{th1}]

By the remark preceding Theorem \ref{th1} we may plainly 
assume that $|Z|<p/2$.

By the assumption on the brick $B$ we have
\begin{equation}\label{eqn3}
|B|=|Z|\left(\prod_{i=1}^n|X_i||Y_i|\right)>|H_n|^{3/4+\varepsilon}=p^{3n/2+3/4+\varepsilon(2n+1)}.
\end{equation}

For each $i$, there exists an element $a_i\in\mathbb{F}$
such that the number of solutions to the equation
$a_i=x_i+x'_i$, $x_i,x'_i\in X_i$, is at least $|X_i|^2/p$.
We denote by $\tilde{X_i}=X_i\cap(a_i-X_i)$ the set of the elements
$x_i\in X_i$ such that $a_i-x_i\in X_i$. We thus have $|\tilde{X_i}|\ge
|X_i|^2/p$. We similarly define $\tilde{Y_i}=Y_i\cap(b_i-Y_i)$
for some appropriate $b_i$ and also have 
$|\tilde{Y_i}|\ge|Y_i|^2/p$. It follows by  \eqref{eqn3} that
$$
|Z|^2\left(\prod_{i=1}^n|\tilde{X_i}||\tilde{Y_i}|\right)
\ge\frac{\left(|Z|\prod_{i=1}^n|{X_i}||{Y_i}|\right)^2}{p^{2n}}
>p^{n+3/2+\epsilon (4n+2)}.
$$
Hence for $n>1/8\epsilon$ we obtain from Proposition \ref{p21} that $2Z+\sum_{i=1}^n\tilde{X_i}\cdot \tilde{Y_i}=\mathbb{F}$ and
consequently
$$
B\cdot B\supseteq [(a_1,a_2,\dots ,a_n),(b_1,b_2,\dots ,b_n),\mathbb{F}],
$$
that is $B\cdot B$ contains at least one coset of  the non trivial subgroup
$G=[\underline{0},\underline{0},\mathbb{F}]$ of $H_n$.

In fact we may derive from the preceding argument a little bit more: for any index $i$ we have
$$
\sum_{a_i\in\mathbb{F}}|X_i\cap(a_i-X_i)|=|X_i|^2, \quad
\sum_{b_i\in\mathbb{F}}|Y_i\cap(b_i-Y_i)|=|Y_i|^2,
$$
hence
$$
\prod_{i=1}^n\left(\sum_{a_i\in\mathbb{F}}|X_i\cap(a_i-X_i)|\right)
\left(\sum_{b_i\in\mathbb{F}}|Y_i\cap(b_i-Y_i)|\right)
=\prod_{i=1}^n|X_i|^2|Y_i|^2,
$$
or equivalently by developing the product
\begin{equation}\label{eqnn10}
\sum_{\underline{a},\underline{b}\in\mathbb{F}^n}
\prod_{i=1}^n|X_i\cap(a_i-X_i)||Y_i\cap(b_i-Y_i)|
=\prod_{i=1}^n|X_i|^2|Y_i|^2.
\end{equation}
We denote by $E$ the set of all pairs $(\underline{a},\underline{b})\in\mathbb{F}^n\times \mathbb{F}^n$ such that
$$
|Z|^2\prod_{i=1}^n|X_i\cap(a_i-X_i)||Y_i\cap(b_i-Y_i)|>p^{n+2}.
$$ 
For such a pair 
$(\underline{a},\underline{b})$, the coset
$[\underline{a},\underline{b},\mathbb{F}]$ is contained
in $B\cdot B$ by the above argument. Then by  \eqref{eqnn10} 
$$
\left(\prod_{i=1}^n|X_i||Y_i|\right)|E|+
p^{n+2}(p^{2n}-|E|)>
\left(\prod_{i=1}^n|X_i||Y_i|\right)^2
$$
hence
$$
|E|>\frac{\prod_{i=1}^n|X_i|^2|Y_i|^2-p^{3n+2}}{\prod_{i=1}^n|X_i||Y_i|-p^{n+2}}.
$$
For $n>1/\epsilon$, we have by \eqref{eqn3} and the fact that $|Z|\le p$
$$
\prod_{i=1}^n|X_i||Y_i|>p^{3n/2+7/4},
$$
hence
$$
|E|\ge (1-p^{-3/2})\prod_{i=1}^n|X_i||Y_i|= (1-p^{-3/2})\frac{|B|}{|Z|}.
$$
Since $|Z|\le p/2$,
we thus have shown that $B\cdot B$ contains at least
$2(1-p^{-3/2})|B|/p\ge |B|/p$ cosets 
$
[\underline{a},\underline{b},\mathbb{F}]=[\underline{a},\underline{b},0] [\underline{0},\underline{0},\mathbb{F}], 
$
as we wanted.
\end{proof}

\begin{proof}[Proof of Proposition \ref{p2}]

Since $B$ is a brick, $B\cdot B$ is contained in a brick 
which takes the form $[\underline{U},\underline{V},W]$ where $\underline{U}$, $\underline{V}\subset\mathbb{F}^n$
are direct products of subsets of  $\mathbb{F}$ and $W\subset \mathbb{F}$. Since any non trivial subgroup of $H_n$ has
at least one of his $(2n+1)$ coordinate projections  equals to $\mathbb{F}$, it suffices to prove that neither 
$W$ is equal to $\mathbb{F}$, nor
$U$, nor $V$ contains a subset of the type
$\{x_1\}\times\cdots\times\mathbb{F}\times\cdots\times \{x_n\}$.

Let $B=[R,R,Z]$ where 
$$
R=\Big\{(r_1,r_2,\dots ,r_n)\in\mathbb{F}^n\,\mid\, 0\leq r_i < \sqrt{(p-1)/{2n}}\Big\}\quad
\text{and}\quad 
Z=\Big\{z\in\mathbb{F}\,\mid\, 0\leq z< {{p}/ 4}\Big\}.
$$ 
We have $|B|\ge p^{n+1}/4(2n)^{n}$ and $B\cdot B\subseteq[R+R,R+R,Z+Z+\langle R,R\rangle].$
Clearly $R+R\subseteq \Big[0,\sqrt{{2p}/n}\Big]^n,$ $Z+Z \subseteq \big[0,{(p-1)/2}\big)$ and $\langle R,R\rangle\subseteq \big[0,{(p-1)/2}\big]$.
Hence the statement.
\end{proof}

   \noindent{\bf Acknowledgement:} We thank the referee who gave a simplification in our proofs.

This work is supported by ``Balaton Program
   Project" T\'ET-09-1-2010-0056, and OTKA grants K-81658, K-100291.

\end{document}